\newtheorem{theorem}{Theorem}
\newtheorem{assumption}[theorem]{Assumption}
\newtheorem{corollary}[theorem]{Corollary}
\newtheorem{definition}[theorem]{Definition}
\newtheorem{remark}[theorem]{Remark}
\newcommand{\bi}{\begin{itemize}}
\newcommand{\ei}{\end{itemize}}
\newcommand{\bd}{\begin{displaymath}}
\newcommand{\ed}{\end{displaymath}}
\newcommand{\be}{\begin{eqnarray*}}
\newcommand{\ee}{\end{eqnarray*}}
\newcommand{\bK}{{\bf K}}
\newcommand{\mE}{\mathbb{E}}
\newcommand{\mU}{\mathbb{U}}
\DeclareMathOperator{\Tr}{Tr}
\begin{document}

\title{Sample Complexity for Nonlinear Dynamics}

\author{Yongxin Chen and Umesh Vaidya
\thanks{Y.~Chen is with the
School of Aerospace Engineering, Georgia Institute of Technology, Atlanta, GA; {yongchen@gatech.edu}}
\thanks{U. Vaidya is with the Department of Electrical and Computer Engineering, Iowa State University, Ames, IA. ; {ugvaidya@iastate.edu}}
\thanks{Umesh Vaidya will like to acknowledge financial support from Department of Energy grant  {DE-OE0000876}}
}

\maketitle

\begin{abstract}
We consider the identification problems for nonlinear dynamical systems. An explicit sample complexity bound in terms of the number of data points required to recover the models accurately is derived. Our results extend recent sample complexity results for linear dynamics.  Our approach for obtaining sample complexity bounds for nonlinear dynamics relies on a linear, albeit infinite dimensional, representation of nonlinear dynamics provided by Koopman and Perron-Frobenius operator. We exploit the linear property of these operators to derive the sample complexity bounds. Such complexity bounds will play a significant role in data-driven learning and control of nonlinear dynamics. Several numerical examples are provided to highlight our theory.
\end{abstract}


\section{Introduction}

Nonlinear dynamical models possess the capacity to represent a variety of real-world systems and have been employed in different areas such as automatic control, robotics, autonomy and so on. A most common approach to obtaining a nonlinear model is via the first principle, which requires a good understanding of the underlying physics. In many cases, this requirement is however not realistic. Therefore, a data-driven approach using generated data samples to build a nonlinear model is becoming more and more critical. This is known as the system identification problem in control theory. 

Compared to that of linear systems, the system identification problems for nonlinear dynamics are considerably more difficult. There have been many works on this topic, and many algorithms have been proposed \cite{GirBai10,Nel13}. Most of these works focus on the asymptotical performance of the algorithms, which copes with the situation when the amount of data available goes to infinity.
A critical question pertains to the data efficiency hasn't been adequately addressed yet. How many data points do we need to recover a dynamical model to a certain precision? 

It turns out that this question falls into the scope of sample complexity theory, which is a key mathematical tool in theoretical machine learning. This tool is used to analyze the performance guarantee of machine learning models. Many fundamental results have been established along this line in supervised learning \cite{Vap99}. This attempt is not so successful in reinforcement learning, especially when the state space is continuous as in most control applications. Recently, as the first step in this direction, \cite{DeaManMatRec17,LokVufSheDmi18} some sample complexity results for data-driven linear quadratic regulator problems. 

The purpose of this work is to establish sample complexity results for nonlinear dynamics. To achieve this goal, we use a linear operator theoretic framework involving transfer Koopman and Perron-Frobenius (P-F) operators for linear representation and modeling of a nonlinear system. Linear operator theoretic framework has attracted lot of attention lately from the theoretical and applied dynamical system communities \cite{Dellnitz_Junge,Mezic2000,froyland_extracting,Junge_Osinga,Mezic_comparison,Dellnitztransport,mezic2005spectral,Mehta_comparsion_cdc,Vaidya_TAC,Vaidya_CLM_journal,susuki2011nonlinear,mezic_koopmanism,mezic_koopman_stability,surana_observer,huang2018data,huang2018_feedbackstabilization}. One of the features that makes this approach attractive is its ability to approximate complicated and complex nonlinear dynamical system from time-series data. The basic idea behind the linear operator framework is to lift the nonlinear finite dimensional evolution of a dynamical system in the state space to linear albeit infinite dimensional evolution of functions in the functional space.   Various algorithms are proposed for the finite dimensional approximation of these linear operators \cite{dellnitz2002set,Mezic2000,DMD_schmitt,rowley2009spectral,EDMD_williams,huang2018data}. However, to the best of authors knowledge, the problem of deriving sample complexity results for these operators has not been addressed yet. The linear nature of these operators allows us to carry out sample complexity analysis similar to the one developed for the case of a linear system but in  the lifted functional space \cite{DeaManMatRec17,LokVufSheDmi18}. We believe that sample complexity for a nonlinear system will play a fundamental role in our understanding of reinforcement learning algorithms, one of the fast-growing area of machine learning. 

The paper is organized as follows. In Section \ref{sec:pre} we introduce linear operator framework involving Koopman and P-F operators. The result on sample complexity is presented in Section \ref{sec:results}. We provide several examples in Section \ref{sec:eg} to illustrate our results. This follows by a short concluding remark in Section \ref{sec:conclusion}.

\section{Preliminaries}\label{sec:pre}
In this section, we provide brief overview of the theory behind linear operator involving P-F and Koopman operator. For more details please refer to \cite{Mezic2000,Mehta_comparsion_cdc,mezic_koopmanism}. 

Consider a discrete-time dynamical system 
\[x_{t+1}=T(x_t)\]
where $T:X\to X\subset \mathbb{R}^n$ with $X$ assumed to be compact. Associated with this dynamical system are two linear operators namely Koopman and Perron-Frobenius operator are are defined as follows. 
\begin{definition}[P-F operator] Let $L_2(X)$ be the space of square integrable functions. Under the assumption that the mapping $T$ is invertible, the P-F operator $\mathbb{P}_T: L_2(X)\to L_2(X)$ is defined as follows.
\begin{eqnarray}
[\mathbb{P}_T g](x)=g(T^{-1}(x))\left |\frac{\partial T^{-1}}{\partial x}\right|,
\end{eqnarray}
where $|\cdot|$ stands for the matrix determinant. 
\end{definition}
\begin{remark} The P-F operator can also be defined without the restrictive invertibility assumption on the mapping $T$ on the space of measures. For more details on this please refer to \cite{mezic2005spectral}. 
\end{remark}
\begin{definition}[Koopman Operator] The Koopman operator $\mathbb{U}_T: L_2(X)\to L_2(X)$ is defined as
\begin{eqnarray}
[\mathbb{U}_T h](x)=h(T(x)).
\end{eqnarray}
\end{definition}

The P-F and Koopman operators are dual to each other in the sense that
\begin{eqnarray}
\int_X [\mathbb{P}_T g](x)h(x)dx=\int_X g(x)[\mathbb{U}_T h](x)dx.\label{duality}
\end{eqnarray}
The duality can be expressed compactly as 
\[
\left<\mathbb{U}_T h,g\right>=\left<h,\mathbb{P}_T g\right>.
\]
These definitions extends to the setting of random dynamical systems. Consider the random dynamical system 
\begin{eqnarray}
x_{t+1}=F(x_t,\xi_t),\label{rds}
\end{eqnarray}
where $\xi_0,\xi_1,\ldots$ are assumed to independent identical distributed random vectors. One case of particular interest is
\begin{eqnarray}\label{eq:Txi}
x_{t+1}=T(x_t)+\xi_t,
\end{eqnarray}
which is a deterministic system perturbed by random noise $\xi$.

Next we provide definitions for the P-F and Koopman operators for the random dynamical system (\ref{rds}). We will use the same notation for the representing these operators for the deterministic and random dynamical systems. 
\begin{definition}[P-F operator] 
The P-F operator $\mathbb{P}_F :L_2(X)\to L_2(X)$ for the random dynamical system (\ref{eq:Txi}) is defined as
\begin{eqnarray}
[\mathbb{P}_F g](x)=\int_X g(y)\rho(x-T(y))dy,\label{PF_rds}
\end{eqnarray}
where $\rho(\cdot)$ is the probability density of $\xi$.
\end{definition}
\begin{definition}[Koopman Operator] 
The Koopman operator $\mathbb{U}_F: L_2(X)\to L_2(X)$ is defined as
\begin{eqnarray}
[\mathbb{U}_F h](x)= \mathbb{E}_\xi[h(F(x,\xi))],\label{K_rds}
\end{eqnarray}
where the expectation is taken with respect to $\xi$. 
\end{definition}

Again the duality between the P-F and Koopman operator follows in the random setting and equality (\ref{duality}) is true for P-F and Koopman operator as defined in Eqs. (\ref{PF_rds})-(\ref{K_rds}). This duality between the Koopman and P-F operator is exploited to propose finite dimensional approximation of the P-F operator using numerical algorithm developed for the approximation of Koopman operator \cite{huang2018data}.

\section{Sample Complexity of Koopman and Perron-Frobenius Operators}\label{sec:results}
Linear operator theoretic framework involving P-F and Koopman operator provides a powerful tool for the representation, analysis, and design of nonlinear dynamical systems. Our objective in this section is to derive sample complexity results for the finite dimensional approximation of these linear operators. Although several algorithms are proposed for the finite dimensional approximation of the Koopman operators from time series data, the fundamental principle behind these different algorithms remains the same. Hence, the sample complexity results that we derive, using extended dynamic mode decomposition (EDMD) algorithm \cite{DMD_schmitt,rowley2009spectral,EDMD_williams} and its modification for the approximation of P-F operator,   should apply to other algorithms as well. 

For the finite dimensional approximation, let 
\[X=[x_1,\ldots, x_T],\;\;\;Y=[y_1,\ldots,y_T]\]
be data points generated by random dynamical system \eqref{rds} through experiments or simulations. Note that here $y_k=F(x_k,\xi_k)$. These data samples could be from a single trajectory, in which case $y_k=x_{k+1}$, or different trajectories. 

To establish a finite dimensional approximation of a Koopman operator we first choose a set of finite many basis functions 
\begin{eqnarray}
\Psi(\cdot)=[\psi_1(\cdot),\ldots, \psi_N(\cdot)]^\top.\label{basis}
\end{eqnarray}
A corresponding approximation of a Koopman operator is nothing but its projection on this basis. More specifically, if
\[
	[\mathbb{U}_F\psi_k](\cdot)=\sum_{j=1}^N {\bf K}_{jk} \psi_j(\cdot)+r_k(\cdot),\;\;\;k=1,\ldots,N
\]
for some matrix $\bK$ with $r_k(\cdot)$ being almost perpendicular to the linear span of $\Psi$ for each $k$, then we say $K$ is the approximation of the Koopman operator $\mU_F$ on the basis $\Psi$. When the basis functions are properly chosen, the error functions $r_k$ are usually small. Consequently, the matrix $\bK$ is a relatively accurate representation of the Koopman operator and therefore the underlying nonlinear dynamics.

There are two sources of error in the approximation of the infinite dimensional linear operators. The first source of error is due to finite choice of the basis function used in the projection. Apart from the cardinality, choice of the basis function itself should to be rich enough to accurately capture the dynamics. In particular, the choice could be directed by the fact the unknown eigenfunctions of the operator lies in the span of the basis functions. The physics of the problem such as continuity property or the non-locality or locality of the phenomena to be captured can be used in determining the choice and number of basis function. The second source of error arise due to finite length of data used in the approximation of the operator. In this paper we are interested in characterizing the error due to the finite data length. Since our focus in the present paper is the estimation error induced by limited data points, we shall make the following assumption. 
\begin{assumption}\label{assumption_closed} 
The action of the Koopman operator on the basis functions, $\Psi$, is closed, i.e.,
\[
[\mathbb{U}_F\psi_k](x)=\sum_{j=1}^N {\bf K}_{jk} \psi_j(x),\;\;\;k=1,\ldots,N
\]
for some constant coefficients ${\bf K}_{jk}$. 
\end{assumption}

Let $\varphi$ be any function in the span of $\Psi$, namely, 
\[
	\varphi(x)=\Psi(x)^\top \alpha
\]
for some vector $\alpha\in \mathbb{R}^N$. By definition, the function $\varphi$ will evolve under the action of Koopman operator as 


\[
[\mathbb{U}_F \varphi](x)=\mE_\xi\{\varphi(y)\}=\mE_\xi\{\Psi(y)\}^\top\alpha,
\]
where $y=F(x,\xi)$.
It follows that  
\[
	\mE_\xi\{\Psi(y)\}^\top\alpha=[\mathbb{U}_F \varphi](x)=\Psi(x)^\top{\bf K}\alpha,
\]
which says that the coordinate of $\mathbb{U}_F \varphi$ in the space spanned by $\Psi$ is $\bK \alpha$. This implies that applying Koopman operator $\mU_F$ on a function $\varphi$ is nothing but multiplying its coordinate $\alpha$ by $\bK$ on the left. 

To estimate the approximation $\bK$, we multiply the equation by $\Psi(x)$
\begin{eqnarray}
\Psi(x)\mE_\xi\{\Psi(y)\}^\top =\Psi(x)\Psi(x)^\top {\bf K}\label{eq1}
\end{eqnarray}
Since $x$ is fixed, this is same as 
	\begin{equation}
		\mE_\xi\{\Psi(x)\Psi(y)^T\} = \Psi(x)\Psi(x)^T \bK.
	\end{equation}
    Now taking expectation with respect to the initial condition $x$ , we obtain
    \begin{equation}
		\mE_x\mE_\xi\{\Psi(x)\Psi(y)^T\} = \mE_x\{\Psi(x)\Psi(x)^T \}\bK.
	\end{equation}
Let 
	\[
		\Sigma_1=\mE_{x,\xi}\{\Psi(x)\Psi(y)^T\}
	\] 
and 	
	\[
		\Sigma_0= \mE_x\{\Psi(x)\Psi(x)^T\},
	\] 
then $\Sigma_1=\Sigma_0\bK$ and consequently $\bK=\Sigma_1\Sigma_0^{-1}$. 

When only generated data samples $X, Y$ are available, we have
	\begin{equation}\label{eq:Psievolove}
	\Psi(y_t) = {\bf K}^T \Psi(x_t) + \delta_t,
	\end{equation}
where $\delta_t:=\Psi(y_t) - \mE \{\Psi(F(x_t,\xi_t))\}$ satisfies
	\[
		\mE\{\delta_t\} = 0.
	\]
We assume that $\mE\{\delta_{t,j}^2\}\le \Delta$. This is clearly true when the dynamics is of the form \eqref{eq:Txi} with $\xi$ having bounded variance.

Multiplying (the transpose of) all terms of \eqref{eq:Psievolove} with $\Psi(x_t)$ on the left and sum up them over $t$ gives
	\[
		\hat\Sigma_1 = \hat\Sigma_0 {\bf K} + R
	\]
with 
	\[
		\hat\Sigma_1 = \frac{1}{T} \sum_{t=1}^T \Psi(x_t)\Psi(y_t)^T, \quad \hat\Sigma_0 = \frac{1}{T} \sum_{t=1}^T \Psi(x_t)\Psi(x_t)^T,
	\]
and 
	\[
		R=\frac{1}{T}\sum_{t=1}^T \Psi(x_t)\delta_t^T.
	\]
Clearly, $\hat\Sigma_0$ is an unbiased estimation of $\Sigma_0$ and $\hat\Sigma_0\rightarrow \Sigma_0$ when $T\rightarrow \infty$. The same argument holds for $\hat\Sigma_1, \Sigma_1$. The error term $R$ has zero expectation, i.e., $\mE\{R\}=0$.
Hence, a least square estimator of $\bK$ is given by
	\begin{equation}\label{eq:Khat}
		\hat{\bf K} := \hat\Sigma_0^{-1} \hat\Sigma_1. 
	\end{equation}
This estimator is widely used in the Koopman operator literatures \cite{EDMD_williams}. 

As $T\rightarrow \infty$, $\hat\Sigma_0\rightarrow \Sigma_0, \hat\Sigma_1\rightarrow \Sigma_1$, and therefore $\hat\bK\rightarrow \bK$. 
In addition, there estimation error 
	\[
		\hat{\bf K} - {\bf K}= \hat\Sigma_0^{-1} R
	\]
can be analyzed as follows. 
We first invoke Cauchy-Schwarz inequality, which gives
	\begin{eqnarray*}
		\mE \{\|\hat{\bf K} - {\bf K}\|_F\} &=& \mE\{\|\hat\Sigma_0^{-1} R\|_F\}
		\\
		&\le& \mE\{\|\hat\Sigma_0^{-1}\|_F\|R\|_F\}
		\\
		&\le& \sqrt{\mE\{\|R\|_F^2\}\mE\{\|\hat\Sigma_0^{-1}\|_F^2\}}.
	\end{eqnarray*}
In the above, $\|\cdot\|_F$ denotes Frobenius norm. 
To attain an upper bound on $\mE\{\|R\|_F^2\}$, we observe that each element $R_{kj}$ of $R$ satisfies
	\begin{eqnarray*}
		\mE\{R_{kj}^2\} &=& \frac{1}{T^2} \sum_{t=1}^T \sum_{s=1}^T \mE\{\psi_k(x_t)\delta_{t,j} \psi_k(x_s)\delta_{s,j}\}
		\\
		&=&\frac{1}{T^2} \sum_{t=1}^T \mE\{\psi_k(x_t)^2\delta_{t,j}^2\}
		\\
		&\le&  \frac{\Delta}{T^2} \sum_{t=1}^T \mE\{\psi_k(x_t)^2\}.
	\end{eqnarray*}
The second equality follows from the fact that $\delta_t$ is conditionally independent of $x_s$ for all $s\le t$. The last inequality follows from the boundedness assumption $\mE\{\delta_{t,j}^2\}\le \Delta$.
Summing up the above over all $k, j$ we obtain
	\[
		\mE\{\|R\|_F^2\} \le \frac{\Delta}{T}\mE\{\Tr(\hat\Sigma_0)\}.
	\]
Therefore,
	\begin{equation}\label{eq:expectation}
		\mE \{\|\hat{\bf K} - {\bf K}\|_F\} \le \frac{\sqrt{\Delta}}{\sqrt{T}} \sqrt{\mE\{\Tr(\hat\Sigma_0)\}\mE\{\|\hat\Sigma_0^{-1}\|_F^2\}}.
	\end{equation}
\begin{theorem}\label{thm:main}
Let $\epsilon>0$ and $T> 2N+2$, then with probability at least $1-\epsilon$, the least square estimator $\hat{\bf K}$ in \eqref{eq:Khat} reconstructs ${\bf K}$ within a Frobenius norm error bounded by
	\begin{equation}
	\|\hat{\bf K} - {\bf K}\|_F \le \frac{\sqrt{\Delta}}{\epsilon \sqrt{T}} \sqrt{\mE\{\Tr(\hat\Sigma_0)\}\mE\{\|\hat\Sigma_0^{-1}\|_F^2\}}.
	\end{equation}
\end{theorem}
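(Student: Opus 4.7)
The plan is to convert the expectation bound already derived in \eqref{eq:expectation} into a high-probability statement via Markov's inequality. Almost all of the work has been done: the sequence of calculations leading from \eqref{eq:Khat} through the Cauchy--Schwarz step and the variance computation for $R$ produces the bound
\[
\mE\{\|\hat{\bf K} - {\bf K}\|_F\} \le \frac{\sqrt{\Delta}}{\sqrt{T}} \sqrt{\mE\{\Tr(\hat\Sigma_0)\}\mE\{\|\hat\Sigma_0^{-1}\|_F^2\}},
\]
so the remaining argument is essentially a one-line probabilistic tail inequality applied to a non-negative random variable.

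The step I would carry out is the following. Because $\|\hat{\bf K} - {\bf K}\|_F \ge 0$, Markov's inequality gives, for every $a>0$,
\[
\Pr\{\|\hat{\bf K} - {\bf K}\|_F \ge a\} \le \frac{\mE\{\|\hat{\bf K} - {\bf K}\|_F\}}{a}.
\]
Setting $a = \mE\{\|\hat{\bf K} - {\bf K}\|_F\}/\epsilon$ yields
\[
\Pr\left\{\|\hat{\bf K} - {\bf K}\|_F \le \frac{\mE\{\|\hat{\bf K} - {\bf K}\|_F\}}{\epsilon}\right\} \ge 1-\epsilon,
\]
and substituting the bound from \eqref{eq:expectation} produces exactly the stated inequality.

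The only subtlety, and what I would flag as the main obstacle, is interpreting the role of the hypothesis $T>2N+2$. This condition is not used anywhere in the Markov step itself; its purpose is to ensure that the right-hand side of the bound is actually finite, i.e., that $\mE\{\|\hat\Sigma_0^{-1}\|_F^2\}<\infty$. The empirical Gram matrix $\hat\Sigma_0 = \frac{1}{T}\sum_{t=1}^T \Psi(x_t)\Psi(x_t)^T$ must be invertible almost surely and its smallest eigenvalue must have enough probability mass away from zero that the inverse has a finite second moment; by analogy with the inverse Wishart distribution, the threshold $T>2N+2$ is the natural sample size guaranteeing this. Beyond invoking this regularity, no further probabilistic concentration argument is needed to obtain the statement of the theorem.
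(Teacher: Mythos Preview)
Your proposal is correct and matches the paper's own proof essentially line for line: the paper simply notes that $T>2N+2$ makes $\hat{\bf K}$ well defined and then applies Markov's inequality to the nonnegative random variable $\|\hat{\bf K}-{\bf K}\|_F$ using \eqref{eq:expectation}. Your additional remark on why the threshold $T>2N+2$ controls $\mE\{\|\hat\Sigma_0^{-1}\|_F^2\}$ (via the inverse-Wishart analogy) is more detailed than what the paper provides, but the argument is the same.
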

\begin{proof}
The assumption $T> 2N+2$ guarantees that the least square estimator $\hat{\bf K}$ in \eqref{eq:Khat} is well defined. The rest follows by applying Markov's inequality to the nonnegative random variable $\|\hat{\bf K} - {\bf K}\|_F$. 
\end{proof}

In \cite{Vaidya_CLM_journal}, the duality between the P-F and Koopman operator is exploited to provide algorithm for the finite dimensional approximation of the P-F operator. Following (\ref{duality}) and under the assumption that $g$ and $h$ lie in the span of $\Psi$ i.e., $g=\Psi^\top a$ and $h=\Psi^\top b$ for some constant vectors $a$ and $b$, we can write 
\[
\left<h,g\right>=a^\top\Lambda b,
\]
where $[\Lambda]_{ij}=\left<\psi_i,\psi_j\right>$ for $i=2,\ldots,N$ is a symmetric matrix. Using Assumption \ref{assumption_closed}, we have
\begin{eqnarray}
\left<\mathbb{U}_F h,g\right>=({\bf K}a)^\top \Lambda b=a^\top {\bf K}^\top \Lambda b=\left<h,\mathbb{P}_F g\right>.\label{eq}
\end{eqnarray}
Let ${\bf P}$ be the finite dimensional approximation of the P-F operator on the basis function, $\Psi$. Then using (\ref{eq}), we obtain
\[a^\top {\bf K}^\top \Lambda b=\left<h,\mathbb{P}_F g\right>=a^\top \Lambda {\bf P}b.\]
Since the above is true for all $g$ and $h$ in the span of $\Psi$, we obtain following finite dimensional approximation of the P-F operator in terms of the Koopman operator
\[
{\bf P}=\Lambda^{-1}{\bf K}^\top \Lambda.
\]
\begin{corollary}
Let $\epsilon>0$ and $T> 2N+2$, then with probability at least $1-\epsilon$, the least square estimator $\hat{\bf P}$ in \eqref{eq:Khat} reconstructs ${\bf P}$ within a Frobenius norm error bounded by
	\begin{equation}
	\|\hat{\bf P} - {\bf P}\|_F\! \le\! \frac{\sqrt{\Delta}\|\Lambda\|_2\|\Lambda^{-1}\|_2}{\epsilon \sqrt{T}} \sqrt{\mE\{\Tr(\hat\Sigma_0)\}\mE\{\|\hat\Sigma_0^{-1}\|_F^2\}}.
	\end{equation}
\end{corollary}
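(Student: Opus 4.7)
The plan is to reduce the corollary directly to Theorem~\ref{thm:main} by exploiting the linear algebraic relation between the finite-dimensional P-F and Koopman approximations. The derivation just before the corollary established that ${\bf P}=\Lambda^{-1}{\bf K}^\top\Lambda$ holds identically whenever the closure assumption is in force; this relation is purely algebraic and does not depend on whether the Koopman matrix is the true one or a data-driven estimate. So the first step is to apply the same identity to the estimator, giving $\hat{\bf P}=\Lambda^{-1}\hat{\bf K}^\top\Lambda$, and then subtract.

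Next I would write
\[
\hat{\bf P}-{\bf P}=\Lambda^{-1}(\hat{\bf K}-{\bf K})^\top\Lambda
\]
and take Frobenius norms. The key tool is the mixed submultiplicativity $\|AMB\|_F\le\|A\|_2\|M\|_F\|B\|_2$, together with the fact that transposition preserves the Frobenius norm. This gives
\[
\|\hat{\bf P}-{\bf P}\|_F\le\|\Lambda^{-1}\|_2\,\|\hat{\bf K}-{\bf K}\|_F\,\|\Lambda\|_2.
\]

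The final step is to substitute the high-probability bound for $\|\hat{\bf K}-{\bf K}\|_F$ provided by Theorem~\ref{thm:main}, which holds on an event of probability at least $1-\epsilon$ under the hypothesis $T>2N+2$. Since the deterministic inequality above preserves the event, the same probability bound carries through, and the two scalar factors $\|\Lambda\|_2\|\Lambda^{-1}\|_2$ appear multiplying the Koopman bound, which is exactly the statement of the corollary.

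There is essentially no obstacle here beyond making sure the norm inequality is applied with the right placement of the operator norms versus the Frobenius norm; the probabilistic content has already been absorbed into Theorem~\ref{thm:main}, so the corollary is a one-line consequence once the algebraic identity between $\hat{\bf P}$ and $\hat{\bf K}$ is in hand. The only implicit assumption worth flagging in the write-up is that $\Lambda$ is invertible, which is already guaranteed by the linear independence of the basis $\Psi$ that underlies Assumption~\ref{assumption_closed}.
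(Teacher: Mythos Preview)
Your proposal is correct and follows exactly the approach the paper has in mind: the paper's own proof is the one-line remark that the result ``follows directly from Theorem~\ref{thm:main} and the definition of induced 2-norm $\|\cdot\|_2$,'' and you have simply unpacked that into the identity $\hat{\bf P}-{\bf P}=\Lambda^{-1}(\hat{\bf K}-{\bf K})^\top\Lambda$ together with the mixed norm inequality $\|AMB\|_F\le\|A\|_2\|M\|_F\|B\|_2$.
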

\begin{proof}
It follows directly from Theorem \ref{thm:main} and the definition of induced 2-norm $\|\cdot\|_2$. 
\end{proof}

\section{Numerical Examples}\label{sec:eg}
We provide two examples to illustrate our results. In the first one, the Assumption \ref{assumption_closed}  is valid. The second one is a standard Van der Pol  oscillator which doesn't satisfy this assumption.

\noindent{\bf Example 1}: Consider the following discrete time dynamical system
\begin{eqnarray}
x_1^{t+1}&=&\rho x_1^t+\xi_1^t,\nonumber\\
x_2^{t+1}&=&\mu x^t_2+(\rho^2-\mu)c (x^t_1)^2+\xi_2^t,
\end{eqnarray}
where $\xi_1, \xi_2$ are standard unit variance Gaussian noise, and $\rho<1, \mu<1, c>0$ are parameters.


It is easy to see that the action of the Koopman operator is closed for the basis functions,
\[
	\Psi(x)=[1, x_1,x_2, x_1^2]
\]
with
\[
	\bK=
	\left[\begin{matrix}
	1 & 0 & 0 & 1\\
	0 & \rho & 0 & 0\\
	0 & 0 & \mu & 0\\
	0 & 0 & (\rho^2-\mu)c & \rho^2
	\end{matrix}\right].
\]

Figure \ref{fig:eg1a} and Fig. \ref{fig:eg1b} showcase the estimation errors $err = \frac{\|\hat\bK-\bK\|_F}{\|\bK\|_F}$ as a function of time step $T$, which match with our results pretty well. Note that we used a single trajectory to estimate $\bK$ but the estimation errors are averaged over $50$ realizations. 
\begin{figure}[h]
\centering
\includegraphics[width=0.35\textwidth]{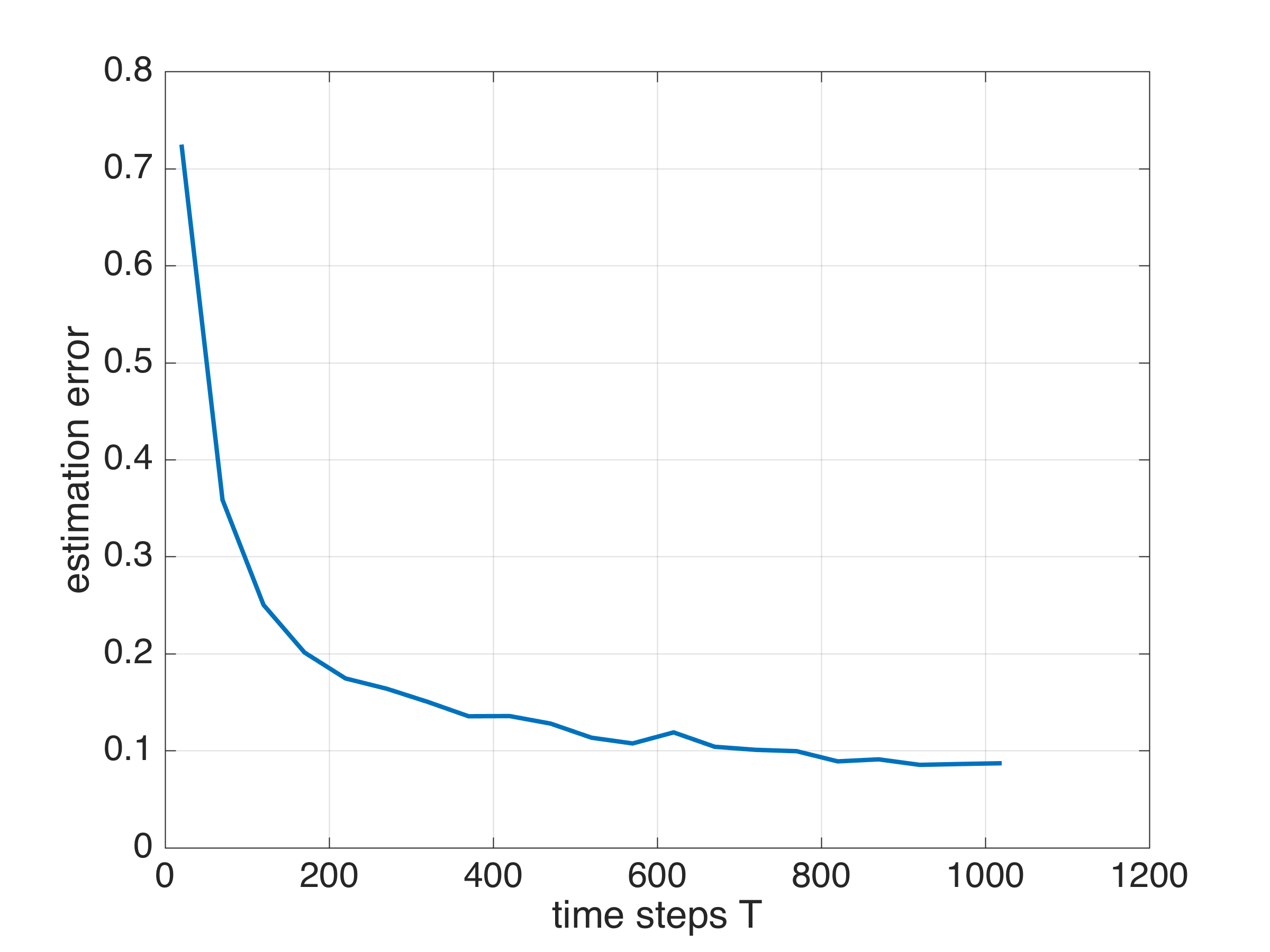}
\caption{Example 1: $\rho=0.2,\,\mu=0.3,\,c=1$}
\label{fig:eg1a}
\end{figure}
\begin{figure}[h]
\centering
\includegraphics[width=0.35\textwidth]{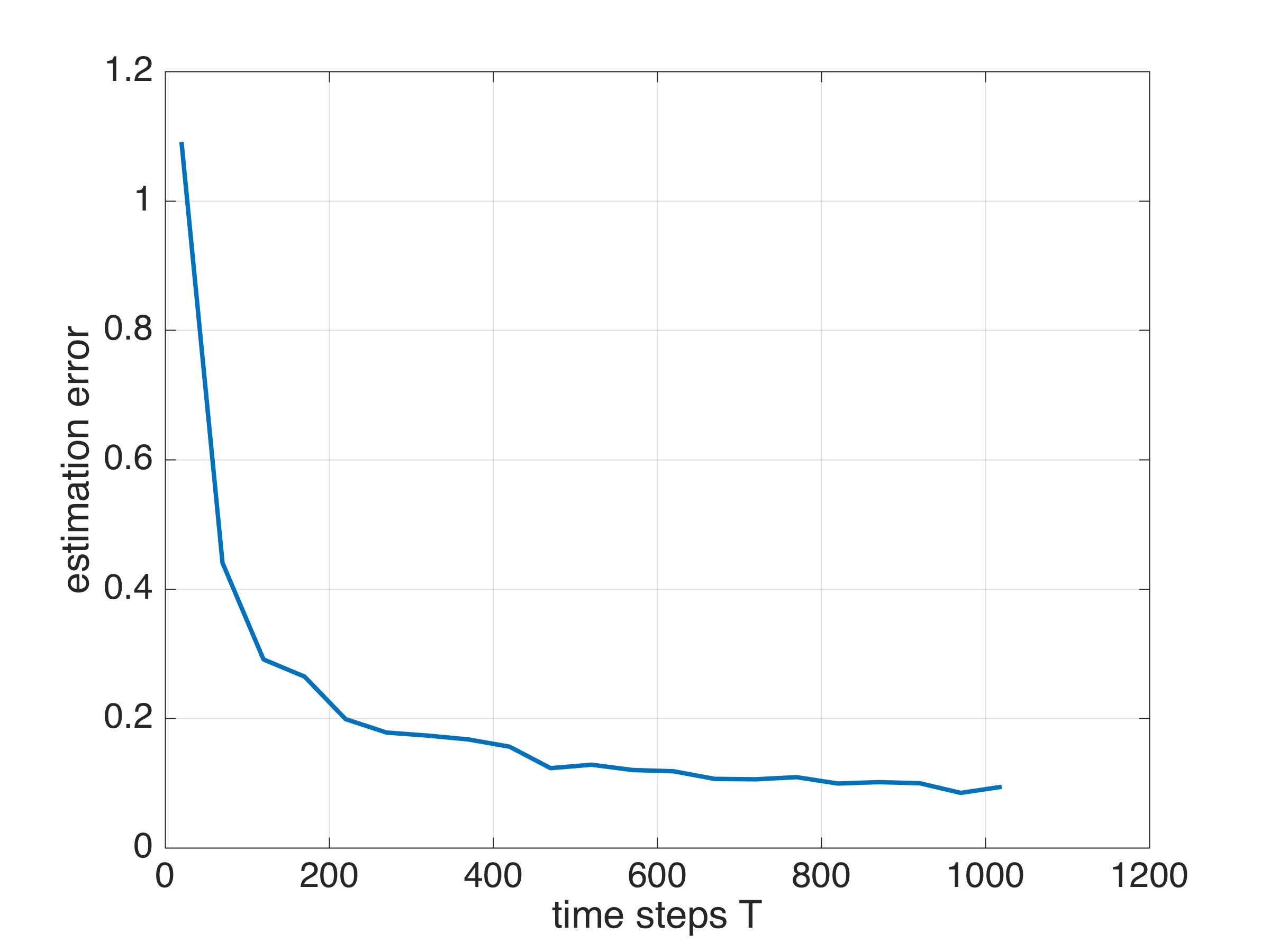}
\caption{Example 1: $\rho=0.8,\,\mu=0.8,\,c=0.9$}
\label{fig:eg1b}
\end{figure}

\noindent {\bf Example 2}: The second example that we consider is the discretized version of Van der Pol oscillator. The discretized equation for the Van der Pol  oscillator is given by
\begin{eqnarray}
x_1^{t+1}&=& x_1^t+\Delta (x_2^t)+\xi_2^t\nonumber\\
x_2^{t+1}&=&x_2^t+\Delta\left( (1-(x_1^t)^2)x_1^t-x_1^t\right)+\xi_1^t
\end{eqnarray}
where $\Delta$ is the time step of discretization and is chosen to be equal to $\Delta=0.0001$. Monomial with largest degree two is used as the choice of basis functions. Hence there are total of six functions in the basis. As can be seen from Fig. \ref{fig:eg2a}, even though the system is not closed with respect to Koopman operator, the convergent result matches the theory pretty well.
\begin{figure}[h]
\centering
\includegraphics[width=0.35\textwidth]{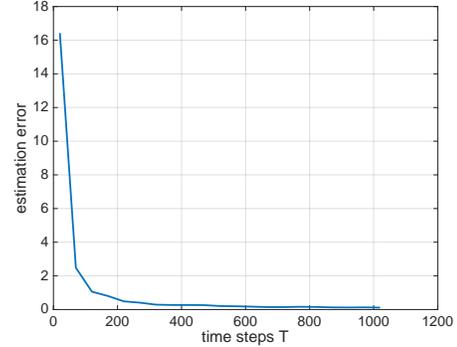}
\caption{Example 2: Van der Pol Oscillator}
\label{fig:eg2a}
\end{figure}

\section{Conclusion}\label{sec:conclusion}
We derived sample complexity results for the identification of nonlinear dynamical systems. The results make use of linear operator theoretic framework involving Koopman operator which lifts nonlinear systems to infinite dimensional linear systems. The results are derived for discrete-time dynamical systems but can be extended to continuous-time setting.


\bibliographystyle{IEEEtran}
\bibliography{refs,ref,ref1,reference}

\begin{thebibliography}{10}
\providecommand{\url}[1]{#1}
\csname url@rmstyle\endcsname
\providecommand{\newblock}{\relax}
\providecommand{\bibinfo}[2]{#2}
\providecommand\BIBentrySTDinterwordspacing{\spaceskip=0pt\relax}
\providecommand\BIBentryALTinterwordstretchfactor{4}
\providecommand\BIBentryALTinterwordspacing{\spaceskip=\fontdimen2\font plus
\BIBentryALTinterwordstretchfactor\fontdimen3\font minus
  \fontdimen4\font\relax}
\providecommand\BIBforeignlanguage[2]{{%
\expandafter\ifx\csname l@#1\endcsname\relax
\typeout{** WARNING: IEEEtran.bst: No hyphenation pattern has been}%
\typeout{** loaded for the language `#1'. Using the pattern for}%
\typeout{** the default language instead.}%
\else
\language=\csname l@#1\endcsname
\fi
#2}}

\bibitem{GirBai10}
F.~Giri and E.-W. Bai, \emph{Block-oriented nonlinear system
  identification}.\hskip 1em plus 0.5em minus 0.4em\relax Springer, 2010,
  vol.~1.

\bibitem{Nel13}
O.~Nelles, \emph{Nonlinear system identification: from classical approaches to
  neural networks and fuzzy models}.\hskip 1em plus 0.5em minus 0.4em\relax
  Springer Science \& Business Media, 2013.

\bibitem{Vap99}
V.~N. Vapnik, ``An overview of statistical learning theory,'' \emph{IEEE
  transactions on neural networks}, vol.~10, no.~5, pp. 988--999, 1999.

\bibitem{DeaManMatRec17}
S.~Dean, H.~Mania, N.~Matni, B.~Recht, and S.~Tu, ``On the sample complexity of
  the linear quadratic regulator,'' \emph{arXiv preprint arXiv:1710.01688},
  2017.

\bibitem{LokVufSheDmi18}
A.~Y. Lokhov, M.~Vuffray, D.~Shemetov, D.~Deka, and M.~Chertkov, ``Online
  learning of power transmission dynamics,'' in \emph{2018 Power Systems
  Computation Conference (PSCC)}.\hskip 1em plus 0.5em minus 0.4em\relax IEEE,
  2018, pp. 1--7.

\bibitem{Dellnitz_Junge}
M.~Dellnitz and O.~Junge, ``On the approximation of complicated dynamical
  behavior,'' \emph{SIAM Journal on Numerical Analysis}, vol.~36, pp. 491--515,
  1999.

\bibitem{Mezic2000}
I.~Mezic and A.~Banaszuk, ``Comparison of systems with complex behavior:
  spectral methods,'' in \emph{Proceedings of the 39th IEEE Conference on
  Decision and Control (Cat. No.00CH37187)}, vol.~2, 2000, pp. 1224--1231
  vol.2.

\bibitem{froyland_extracting}
G.~Froyland, ``Extracting dynamical behaviour via {Markov} models,'' in
  \emph{Nonlinear Dynamics and Statistics: Proceedings, Newton Institute,
  Cambridge, 1998}, A.~Mees, Ed.\hskip 1em plus 0.5em minus 0.4em\relax
  Birkhauser, 2001, pp. 283--324.

\bibitem{Junge_Osinga}
O.~Junge and H.~Osinga, ``A set oriented approach to global optimal control,''
  \emph{ESAIM: Control, Optimisation and Calculus of Variations}, vol.~10,
  no.~2, pp. 259--270, 2004.

\bibitem{Mezic_comparison}
I.~Mezi\'{c} and A.~Banaszuk, ``Comparison of systems with complex behavior,''
  \emph{Physica D}, vol. 197, pp. 101--133, 2004.

\bibitem{Dellnitztransport}
M.~Dellnitz, O.~Junge, W.~S. Koon, F.~Lekien, M.~Lo, J.~E. Marsden, K.~Padberg,
  R.~Preis, S.~D. Ross, and B.~Thiere, ``Transport in dynamical astronomy and
  multibody problems,'' \emph{International Journal of Bifurcation and Chaos},
  vol.~15, pp. 699--727, 2005.

\bibitem{mezic2005spectral}
I.~Mezi{\'c}, ``Spectral properties of dynamical systems, model reduction and
  decompositions,'' \emph{Nonlinear Dynamics}, vol.~41, no. 1-3, pp. 309--325,
  2005.

\bibitem{Mehta_comparsion_cdc}
P.~G. Mehta and U.~Vaidya, ``On stochastic analysis approaches for comparing
  dynamical systems,'' in \emph{Proceeding of IEEE Conference on Decision and
  Control}, Spain, 2005, pp. 8082--8087.

\bibitem{Vaidya_TAC}
U.~Vaidya and P.~G. Mehta, ``Lyapunov measure for almost everywhere
  stability,'' \emph{IEEE Transactions on Automatic Control}, vol.~53, no.~1,
  pp. 307--323, 2008.

\bibitem{Vaidya_CLM_journal}
U.~Vaidya, P.~Mehta, and U.~Shanbhag, ``Nonlinear stabilization via control
  lyapunov meausre,'' \emph{IEEE Transactions on Automatic Control}, vol.~55,
  no.~6, pp. 1314--1328, 2010.

\bibitem{susuki2011nonlinear}
Y.~Susuki and I.~Mezic, ``Nonlinear koopman modes and coherency identification
  of coupled swing dynamics,'' \emph{IEEE Transactions on Power Systems},
  vol.~26, no.~4, pp. 1894--1904, 2011.

\bibitem{mezic_koopmanism}
M.~Budisic, R.~Mohr, and I.~Mezic, ``Applied koopmanism,'' \emph{Chaos},
  vol.~22, pp. 047\,510--32, 2012.

\bibitem{mezic_koopman_stability}
A.~Mauroy and I.~Mezic�, ``A spectral operator-theoretic framework for global
  stability,'' in \emph{Proc. of IEEE Conference of Decision and Control},
  Florence, Italy, 2013.

\bibitem{surana_observer}
A.~Surana and A.~Banaszuk, ``Linear observer synthesis for nonlinear
  systemsusing koopman operator framework,'' in \emph{{Proceedings of IFAC
  Symposium on Nonlinear Control Systems}}, Monterey, California, 2016.

\bibitem{huang2018data}
B.~Huang and U.~Vaidya, ``Data-driven approximation of transfer operators:
  Naturally structured dynamic mode decomposition,'' in \emph{2018 Annual
  American Control Conference (ACC)}.\hskip 1em plus 0.5em minus 0.4em\relax
  IEEE, 2018, pp. 5659--5664.

\bibitem{huang2018_feedbackstabilization}
B.~Huang, X.~Ma, and U.~Vaidya, ``Feedback stabilization using koopman
  operator,'' in \emph{2018, Control and Decision Conference}.\hskip 1em plus
  0.5em minus 0.4em\relax IEEE, 2018.

\bibitem{dellnitz2002set}
M.~Dellnitz and O.~Junge, ``Set oriented numerical methods for dynamical
  systems,'' \emph{Handbook of dynamical systems}, vol.~2, pp. 221--264, 2002.

\bibitem{DMD_schmitt}
P.~J. Schmid, ``Dynamic mode decomposition of numerical and experimental
  data,'' \emph{Journal of Fluid Mechanics}, vol. 656, pp. 5--28, 2010.

\bibitem{rowley2009spectral}
C.~W. Rowley, I.~Mezi{\'c}, S.~Bagheri, P.~Schlatter, and D.~S. Henningson,
  ``Spectral analysis of nonlinear flows,'' \emph{Journal of fluid mechanics},
  vol. 641, pp. 115--127, 2009.

\bibitem{EDMD_williams}
M.~O. Williams, I.~G. Kevrekidis, and C.~W. Rowley, ``A data--driven
  approximation of the koopman operator: Extending dynamic mode
  decomposition,'' \emph{Journal of Nonlinear Science}, vol.~25, no.~6, pp.
  1307--1346, 2015.

\end{thebibliography}

\end{document}